\newtheorem{theorem}{Theorem}
\newtheorem{lemma}{Lemma}
\newtheorem{claim}{Claim}
\newcommand{\eps}{\varepsilon}
\newcommand{\bZ}{\mathbb{Z}}
\newcommand{\ol}[1]{\overline{#1}}
\newcommand{\pr}{\mathrm{Pr}}
\newcommand{\cS}{\mathcal{S}}
\newcommand{\cR}{\mathcal{R}}
\newcommand{\cP}{\mathcal{P}}
\newcommand{\cU}{\mathcal{U}}
\begin{document}

\begin{frontmatter}[classification=text]

\title{A Question of Erd\H{o}s and Graham on Egyptian Fractions} 

\author[dc]{David Conlon\thanks{Supported by NSF Awards DMS-2054452 and DMS-2348859.}}
\author[jf]{Jacob Fox\thanks{Supported by NSF Awards DMS-2154129 and DMS-2452737.}}
\author[xh]{Xiaoyu He\thanks{Supported by NSF Award DMS-2103154.}}
\author[dm]{Dhruv Mubayi\thanks{Supported by NSF Awards DMS-1952767 and DMS-2153576.}}
\author[htp]{Huy Tuan Pham\thanks{Supported by a Clay Research Fellowship and a Stanford Science Fellowship.}}
\author[as]{Andrew Suk\thanks{Supported by an NSF CAREER Award and NSF Awards DMS-1952786 and DMS-2246847.}}
\author[jv]{Jacques Verstra\"ete\thanks{Supported by NSF Awards DMS-1800332 and DMS-2347832.}}

\begin{abstract}
Answering a question of Erd\H{o}s and Graham,  we show that for each fixed positive rational number $x$ the number of ways to write $x$ as a sum of reciprocals of distinct positive integers each at most $n$ is $2^{(c_x + o(1))n}$ for an explicit constant $c_x$ increasing with $x$.
\end{abstract}
\end{frontmatter}

\section{Introduction}

The study of Egyptian fractions, that is, sums of reciprocals of distinct positive integers, has a long history in combinatorial number theory (see, for example,~\cite{BE22}). The fact that every positive fraction can be written as an Egyptian fraction goes back at least to work of Fibonacci at the start of the 13th century. Much more recently, a result of Bloom~\cite{Bl21} says that any subset of the natural numbers of positive upper density has a finite subset the sum of whose reciprocals adds to 1.

In this paper, we will be concerned with a problem raised by Erd\H{o}s and Graham~\cite[Page 36]{ErGr} in 1980 (see also~\cite[Problem 297]{Bl}): how many ways are there to write 1 as a sum of distinct unit fractions with denominator at most $n$? In 2017, this problem was partially addressed by several users, including Lucia, RaphaelB4 and js21, in response to a MathOverflow post~\cite{T17} by Mikhail Tikhomirov. They showed that there is an explicit constant $c \approx 0.91117$ such that the number of such Egyptian fractions is at most $2^{(1+o_n(1))c n}$. This already answered one particular question of Erd\H{o}s and Graham, who asked whether the answer was $2^{n-o(n)}$. Here we solve Erd\H{o}s and Graham's problem more precisely by showing that the count is equal to $2^{(1+o_n(1))cn}$ for the same $c \approx 0.91117$.



Our main result more generally estimates the number of Egyptian fractions summing to any fixed positive rational. Let $h:[0,1]\to \mathbb{R}$ be given by $h(p) = -p\log_2 p - (1-p)\log_2(1-p)$ for $p\in (0,1)$ and $h(0)=h(1)=0$.

\begin{theorem}\label{thm:main-count}
    For any fixed $x \in \mathbb{Q}_{>0}$, 
    the number of subsets $A\subseteq [n]$ with $x = \sum_{a\in A}1/a$ is $2^{c_x n + o(n)}$, where
    \[
        c_x := \int_{0}^{1}h\left(\frac{1}{1+e^{\lambda/y}}\right)dy
    \] 
    and $\lambda$ is the unique real number such that 
    \[
        \int_{0}^{1}\frac{1}{y(1+e^{\lambda/y})}dy = x.
    \]
    In particular, $c_x$ is a strictly increasing function with $c_0=0$, $c_1 \approx 0.91117$ and $c_x \to 1$ as $x \to \infty$.
\end{theorem}

Our proof has two main steps. In the first step, we use entropy methods to show that $2^{(c_x + o(1))n}$ is the correct asymptotic count for the number of Egyptian fractions formed by adding distinct unit fractions with denominator at most $n$ whose sum is at most $x$. This may also be obtained using the techniques discussed in the MathOverflow post~\cite{T17}, but we include our proof, which we found before being alerted to that post, for completeness. Then, in the second step, we use a method reminiscent of the absorption technique in extremal graph theory to show that the same asymptotic count holds for the number of Egyptian fractions summing to exactly $x$. Very roughly, we first set aside a small reservoir subset of $[n]$. Then, after finding many subsets of $[n]$ disjoint from this reservoir whose sums of reciprocals are somewhat smaller than $x$ and whose elements have no very large prime power factors, we  iteratively `clean' these fractions by adding unit fractions from the reservoir to obtain a sum $x'<x$ with small denominator. This is accomplished through the use of a recent result~\cite{CFP} on the existence of homogeneous generalized arithmetic progressions in subset sums. Finally, we find a small subset of the remaining reservoir whose sum of reciprocals is equal to the remaining difference $x-x'$. 

\section{Counting through entropy}

In this section, we will use entropy methods to estimate the number of subsets $A\subseteq [n]$ with $s(A) := \sum_{a\in A}1/a \le x$. To state our result, we need some notation. As in the introduction, let $h(p)=-p\log_2 p - (1-p)\log_2 (1-p)$ for $0<p<1$ and $h(0)=h(1)=0$. Given $x > 0$, we choose $p_1,\dots,p_n \in [0,1]$ so as to maximize $\sum_{m=1}^{n} h(p_m)$ given $\sum_{m=1}^{n} p_m/m \le x$. We then let $P(x)$ be the distribution of $(Y_1,\dots,Y_n)$, where each $Y_m=1$ with probability $p_m$ and $0$ otherwise independently of each other. Then the (Shannon) entropy of $P(x)$ is given by $H(P(x)) = \sum_{m=1}^{n}h(p_m)$.

\begin{lemma}\label{lem:main-ent}
    For $\eps>0$ and $x \in (0,(1-\eps)(\ln n)/2)$, the number of subsets $A\subseteq [n]$ with $s(A) \le x$ is bounded above by $2^{H(P(x))}$. 
    Furthermore, there is $c_{x,n}>0$ with $c_{x,n} = \Theta(e^{-2x})$ such that, for any $U \subseteq [n]$, the number of subsets $A\subseteq U$ with $s(A) \le x$ is bounded below by $2^{H(P(x)) - (n-|U|) - O(\sqrt{n/c_{x,n}})}$.
\end{lemma}

We first characterize $P(x)$.

\begin{lemma}\label{lem:opt-dis}
    For $x < (1/2)(\sum_{m=1}^{n} 1/m)$, the distribution $P(x)$ is given by setting $p_m = \frac{1}{1+e^{c_{x,n} n/m}}$ for the unique $c_{x,n}$ such that $\sum_{m=1}^{n} p_m/m = x$. 
    Moreover, there is $C>0$ such that, provided $x < (1-\eps)(\ln n)/2$, $c_{x,n} > 0$ and $c_{x,n} \in [C^{-1}e^{-2x}, Ce^{-2x}]$. 
    Finally, for $\eps>0$ and assuming $x\in [\eps,1/\eps]$, $c_{x,n} = \lambda+o_n(1)$, where $\lambda > 0$ is the unique solution to 
    \[
        \int_{0}^{1} \frac{1}{y(1+e^{\lambda/y})} dy = x.
    \]
    In particular, 
    \begin{equation}\label{eq:ent-cal}
        H(P(x)) = (1+o_n(1)) n \int_{0}^{1} h\left(\frac{1}{1+e^{\lambda/y}}\right)dy = \Theta(n).
    \end{equation}
    Note that here the $o_n(1)$ terms may depend on $\eps$. 
\end{lemma}

\begin{proof}
    Note that $H(P(x))$ is strongly convex and bounded (as a function of $p_1,\dots,p_n$) in $[0,1]^n$. The unique stationary point of $H(P(x))$ in $[0,1]^n$ is $p_1=\dots=p_n=1/2$. Thus, for $x<(1/2)(\sum_{m=1}^{n}1/m)$, the maxima of $H(P(x))$ must be achieved on the boundary of $[0,1]^n \cap \{\sum_m p_m/m \le x\}$ and, since $h(x)=0$ for $x\in \{0,1\}$, we must have that the maxima are achieved on $\sum_m p_m/m = x$. At a maximum $(p_1^*,\dots,p_n^*)$, we must have that $\nabla H(P(x))$ is parallel to $(1,1/2,\dots,1/n)$. Noting that $h'(p) = \log \frac{1-p}{p}$, we obtain that such a point satisfies $p_m^* = \frac{1}{1+e^{c'/m}}$ for some $c'$. By the condition $\sum_{m\le n}p_m^*/m=x$, we must also have that $c'$ satisfies 
    \[
        \sum_{m=1}^{n} \frac{1}{m(1+e^{c'/m})}=x.
    \]

    Letting $c = c_{x,n} = c'/n$, we have that 
    \[
        \frac{1}{n} \sum_{m=1}^{n} \frac{1}{(m/n)(1+e^{c/(m/n)})} = x.
    \]
    It is easy to check that $c > 0$ when $x < (1-\eps)(\ln n)/2$ and, for $x$ sufficiently large, that $c = \Theta(e^{-2x})$. Indeed, for the last estimate, we observe that, for $k \ge 1$,   
    \[
        \sum_{m=cn/(k+1)}^{cn/k} \frac{1}{m(1+e^{cn/m})} = \Theta(e^{-k}k^{-1}),
    \]
    so \[
    \sum_{m\le cn} \frac{1}{m(1+e^{cn/m})} = O(1).
    \]
    For 
    $1 \le k \le 1/c$, 
    \[
    \sum_{m=cnk}^{cn(k+1)} \frac{1}{m(1+e^{cn/m})} \le \frac{1}{k(1+e^{1/(k+1)})}, \qquad \sum_{m=cnk}^{cn(k+1)} \frac{1}{m(1+e^{cn/m})} \ge \frac{1}{(k+1)(1+e^{1/k})}.
    \]
    Note that $1+e^{1/k} = 2 + O(1/k)$, so we obtain that \[
    \sum_{k= 1}^{1/c} \frac{1}{k(1+e^{1/(k+1)})}, \; \sum_{k=1}^{1/c}\frac{1}{(k+1)(1+e^{1/k})} = \frac{1}{2}\ln (1/c) + O(1),
    \]
    from which we immediately deduce the desired estimate on $c$. 
    
    Finally, for $x\in [\eps,1/\eps]$ and $c>0$, we can approximate $\frac{1}{n} \sum_{m=1}^{n} \frac{1}{(m/n)(1+e^{c/(m/n)})}$ by the integral $\int_{0}^{1} \frac{1}{y(1+e^{c/y})}dy$. We thus obtain that for $\lambda>0$ satisfying $\int_{0}^{1} \frac{1}{y(1+e^{\lambda/y})} dy = x$, we have $c_{x,n} = \lambda+o_n(1)$, from which (\ref{eq:ent-cal}) follows readily. 
\end{proof}

We will use the following version of the standard Berry--Esseen bound \cite{B, E, S} in the proof of Lemma \ref{lem:main-ent}. 

\begin{lemma}\label{lem:B-E}
    Let $X_1,\dots,X_n$ be independent centered random variables with $\mathbb{E}[X_i^2] = \zeta_i$ and $\mathbb{E}[|X_i|^3] = \rho_i$. Let $Z = \frac{\sum_{i=1}^{n}X_i}{(\sum_{i=1}^{n}\zeta_i)^{1/2}}$ and $Z'$ be a standard Gaussian. Then 
    \[
        \sup_{y\in \mathbb{R}} |\pr[Z \le y] - \pr[Z' \le y]| \le C\frac{\sum_{i=1}^{n}\rho_i}{(\sum_{i=1}^{n}\zeta_i)^{3/2}}.
    \]
\end{lemma}

\begin{proof}[Proof of Lemma \ref{lem:main-ent}]
Let $S$ be a finite set of real numbers and $n=|S|$. Let $r_S(x)$ be the number of subsets of $S$ that sum to at most $x$. Define the random variable $X$ to be a uniform random subset of $S$ whose elements sum to at most $x$. Note that the entropy of $X$ satisfies $H(X)=\log_2 r_S(x)$, so $r_S(x)=2^{H(X)}$. For each $s \in S$, let $X_s$ be the indicator random variable of the event $s \in X$ and let $p_s = \Pr[X_s]$, so that $\mathbb{E}[\sum_{s\in S}s X_s]=\sum_{s \in S} sp_s \leq x$. Observe that $X$ has the same distribution as the joint distribution of the $n$ random variables $X_s$. Therefore, by subadditivity of the entropy function, we have 
\[
    H(X) \leq \sum_{s \in S}H(X_s)=\sum_{s \in S}h(p_s).
\]
Hence, we get the upper bound 
$$r_S(x) \leq 2^h,$$ 
where $h$ is the maximum value of $\sum_{s \in S} h(p_s)$ over all choices of $(p_s)_{s \in S}$ satisfying $\sum_{s \in S} sp_s \leq x$. In particular, for $S=\{1/m:m\in [n]\}$, we obtain that the number of subsets $A\subseteq [n]$ with $s(A)\le x$ is at most $2^{H(P(x))}$, as claimed.

We now turn to the lower bound. Consider independent Bernoulli random variables $Y_m$ for $m \in [n]$ satisfying $Y_m=1$ with probability $p_m$ and $Y_m=0$ otherwise. Let $Y=(Y_m)_{m\in [n]}$, $Z=\sum_{m\in [n]} Y_m/m$ and $E$ be the indicator of the event $Z \le x$. Recall that, for $a\in \{0,1\}$, the conditional entropy $H(Y | E=a) = -\sum_{y\in \{0,1\}^n} \pr[Y=y | E=a] \log \pr[Y=y | E=a]$ and $H(Y | E) = \sum_{a\in \{0,1\}} \pr[E=a] H(Y | E=a)$. Since $E$ is determined by $Y$, we have 
\[
    H(Y) = H(Y,E) = H(Y|E) + H(E) = H(E) + \sum_{a\in \{0,1\}} \pr[E=a] H(Y | E=a).
\]
We thus have
\begin{equation}\label{eq:ent-decomp}
H(Y | E=1) = \frac{1}{\pr[Z\le x]}\left(H(Y) - H(E) - \Pr[Z>x] H(Y | E=0)\right). 
\end{equation}
Let $c=c_{x,n}$ as in Lemma \ref{lem:opt-dis}. By that lemma, the random variable $Z$ has variance 
\begin{align}\label{eq:2nd-mm}
    \sum_{m=1}^{n} \left(\frac{1}{(1+e^{cn/m})m^2} - \frac{1}{(1+e^{cn/m})^2m^2}\right) = \Theta\left(\sum_{m=1}^{n} \frac{e^{-cn/m}}{m^2}\right) = \Theta\left(\frac{1}{cn}\right). 
\end{align}
To see the last bound, observe that, for $k \ge 1$,  
$\sum_{m=cn/(k+1)}^{cn/k} \frac{e^{-cn/m}}{m^2} = \Theta(\frac{e^{-k}}{cn})$. Similarly, 
for $1\le k\le 1/c$, $\sum_{m=cnk}^{cn(k+1)} \frac{e^{-cn/m}}{m^2} = \Theta(\frac{e^{-1/k}}{cnk^2})$. The desired bound follows from summing these estimates over $k$. 

By a similar argument, the sum of the absolute centered third moments of the $Y_m/m$ is 
\begin{align}\label{eq:3rd-mm}
    \sum_{m=1}^{n} \left[\frac{1}{1+e^{cn/m}} \left|\frac{1}{m}-\frac{1}{m(1+e^{cn/m})}\right|^3 + \left(1-\frac{1}{1+e^{cn/m}}\right) \left|-\frac{1}{m(1+e^{cn/m})}\right|^3 \right] &= O\left(\sum_{m=1}^{n} \frac{e^{-cn/m}}{m^3}\right) \nonumber\\
    &= O\left(\frac{1}{(cn)^2}\right). 
\end{align}
The Berry--Esseen bound, Lemma \ref{lem:B-E}, then yields that, for $g\sim \mathcal{N}(0,1)$, 
\begin{equation}\label{eq:pZ}
    \pr(E=1) = \pr[Z\le x] = \pr[g\le 0] + O((cn)^{-1/2}) = 1/2 + O((cn)^{-1/2}),
\end{equation}
where we used that $\mathbb{E}[Z]=x$, together with (\ref{eq:2nd-mm}) and (\ref{eq:3rd-mm}).

We next bound $H(Y_1,\dots,Y_n | E=0) \le \sum_{m=1}^{n} H(Y_m|E=0)$. To bound the summands, we note by Bayes' rule that  \[\pr(Y_m = 1|E=0) = \frac{\pr(E=0|Y_m=1)\pr(Y_m=1)}{\pr(E=0)}\]
and we will use a similar argument with the Berry--Esseen bound to show that $\pr(Y_m = 1|E=0)$ is close to $\pr(Y_m=1)$. 
Indeed, the calculations above similarly yield that the random variable $Z'_m=Z-Y_m/m+1/m$ is a sum of independent random variables with $\mathbb{E}Z'_m = x + \frac{1}{m} - \frac{1}{m(1+e^{cn/m})}$, $\mathrm{Var}(Z'_m) = \Theta(1/(cn))$ and the sum of absolute centered third moments $O(1/(cn)^2)$. 
By Lemma \ref{lem:B-E}, for $g\sim \mathcal{N}(0,1)$, 
\begin{align*}
    \pr(E = 0 | Y_m=1) &= \pr(Z'_m > x) \\
    &= \pr\left(g > -\frac{1/m-1/(m(1+e^{cn/m}))}{\mathrm{Var}(Z'_m)^{1/2}}\right) + O((cn)^{-1/2}) \\
    &= 1/2 + O\left((cn)^{-1/2}+\frac{\sqrt{cn}}{m}\right),
\end{align*}
assuming that $m > 10\sqrt{cn}$ for the last bound, where we used the simple estimate $\pr(g > z) = \frac{1}{2} + O(z)$ for $|z| \le 1$. 
Therefore,
\begin{equation*}
    \left|\frac{\pr(E=0|Y_m=1)}{\pr(E=0)} - 1 \right| = \left|\frac{1/2 + O\left((cn)^{-1/2}+\frac{\sqrt{cn}}{m}\right)}{1/2 + O((cn)^{-1/2})} - 1 \right| \le O\left(\frac{\sqrt{cn}}{m} + \frac{1}{\sqrt{cn}}\right). 
\end{equation*}
Thus, by Bayes' rule,
\begin{equation}\label{eq:prob-bound-1}
    \frac{|\pr(Y_m=1|E=0) - \pr(Y_m=1)|}{\pr(Y_m=1)} \le O\left(\frac{\sqrt{cn}}{m} + \frac{1}{\sqrt{cn}}\right). 
\end{equation}

From (\ref{eq:prob-bound-1}), we have
\[
    \pr(Y_m=1) \left(1 - O\left(\frac{\sqrt{cn}}{m} + \frac{1}{\sqrt{cn}}\right)\right)\le \pr(Y_m=1|E=0) \le \pr(Y_m=1) \left(1 + O\left(\frac{\sqrt{cn}}{m} + \frac{1}{\sqrt{cn}}\right)\right).
\]
Since $h'(p) = \log \frac{1-p}{p} \le \log \frac 1p$, we have that 
\begin{align*}
    &h(\pr(Y_m=1|E=0)) \\
    &\le h(\pr(Y_m=1)) + \left (\log \frac{1}{\pr(Y_m=1) \left(1 - O\left(\frac{\sqrt{cn}}{m} + \frac{1}{\sqrt{cn}}\right)\right)}\right ) \left | \pr(Y_m=1|E=0)-\pr(Y_m=1)\right |\\
    &\le h(\pr(Y_m=1)) + \left (\log \frac{1}{\pr(Y_m=1) \left(1 - O\left(\frac{\sqrt{cn}}{m} + \frac{1}{\sqrt{cn}}\right)\right)}\right ) \cdot O\left(\frac{\sqrt{cn}}{m} + \frac{1}{\sqrt{cn}}\right) \pr(Y_m=1) \\
    &\le h(\pr(Y_m=1)) +  O\left(\frac{\sqrt{cn}}{m} + \frac{1}{\sqrt{cn}}\right) \pr(Y_m=1) \log \frac{1}{\pr(Y_m=1)},
\end{align*}
where in the last inequality we used $\pr(Y_m=1) \le 1/2$, so $\log \frac{1}{\pr(Y_m=1) \left(1 - O\left(\frac{\sqrt{cn}}{m} + \frac{1}{\sqrt{cn}}\right)\right)} = O\left(\log \frac{1}{\pr(Y_m=1)}\right)$. 
Therefore, 
\begin{align*}
    H(Y_1,\dots,Y_n|E=0) &\le \sum_{m=1}^{n} H(Y_m | E=0)\\
    &\le 10\sqrt{cn} + \sum_{m>10\sqrt{cn}} h(\pr(Y_m=1|E=0)) \\
    &\le 10\sqrt{cn} + \sum_{m>10\sqrt{cn}} \left(H(Y_m) + O\left(\frac{\sqrt{cn}}{m} + \frac{1}{\sqrt{cn}}\right) \pr(Y_m=1) \log \frac{1}{\pr(Y_m=1)}\right)\\
    &\le H(Y_1,\dots,Y_n) + 10\sqrt{cn} + \sum_{m>10\sqrt{cn}}O\left(\frac{\sqrt{cn}}{m} + \frac{1}{\sqrt{cn}}\right) \pr(Y_m=1) \log \frac{1}{\pr(Y_m=1)}\\
    &\le H(Y_1,\dots,Y_n) + 10\sqrt{cn} + \sum_{m>10\sqrt{cn}} O\left(\frac{\sqrt{cn}}{m} + \frac{1}{\sqrt{cn}}\right) \frac{cn/m}{1+e^{cn/m}}\\
    &\le H(Y_1,\dots,Y_n) + O(\sqrt{n/c}). 
\end{align*}
Again, for the last estimate, we note that, for $k \ge 1$, $\sum_{m=cn/(k+1)}^{cn/k} \left(\frac{\sqrt{cn}}{m} + \frac{1}{\sqrt{cn}}\right) \frac{cn/m}{1+e^{cn/m}} \le O\left(\frac{cn}{k^2} e^{-k}\frac{k^2}{\sqrt{cn}}\right) = O(e^{-k}\sqrt{cn})$  and, for $1\le k\le 1/c$, $\sum_{m=cnk}^{cn(k+1)} \left(\frac{\sqrt{cn}}{m} + \frac{1}{\sqrt{cn}}\right) \frac{cn/m}{1+e^{cn/m}} \le O\left(cn\frac{1}{k\sqrt{cn}}\right) \le O\left(\frac{1}{k}\sqrt{cn}\right)$. Summing over $k$, we thus have 
\[
    \sum_{m>10\sqrt{cn}} O\left(\frac{\sqrt{cn}}{m} + \frac{1}{\sqrt{cn}}\right) \frac{cn/m}{1+e^{cn/m}} = O(\sqrt{n/c}).
\]
Combining with (\ref{eq:ent-decomp}), and noting that $H(E)\le 1$ and $\pr(Z\le x) = 1/2 + O((cn)^{-1/2})$ by (\ref{eq:pZ}), we obtain that 
\[
    H(Y_1,\dots,Y_n|E=1) \ge (1-O((cn)^{-1/2}))(H(P(x)) - O(\sqrt{n/c})) = H(P(x)) - O(\sqrt{n/c}).
\]
Using that for any random variable $X$ we have $H(X)\le \log |\mathrm{supp}(X)|$, we obtain that the number of subsets $A\subseteq [n]$ with $s(A) \le x$ is at least $2^{H(P(x)) - O(\sqrt{n/c})}$. This implies that the number of subsets $A\subseteq [n]\setminus U$ with $s(A)\le x$ is at least 
\[
    2^{-(n-|U|)} 2^{H(P(x)) - O(\sqrt{n/c})},
\]
as required. 
\end{proof}

\section{Subset sums of modular inverses}

The main technical tool we still need is the following result, which says that if $q$ is a large prime power and we take a dense subset $I$ of the interval $[q^{\eps}, 2q^{\eps}]$, then every residue class mod $q$ can be written as the sum of a small number of reciprocals of elements of $I$. 
Roughly speaking, this allows us to cancel out any particular prime power from the denominator of a fraction in the absorption step of the proof of Theorem \ref{thm:main-count}. Given a set $A$ of integers, we will use the notation $\Sigma^{[s]}(A)$ for the collection of sums of subsets of $A$ of size at most $s$. 

\begin{theorem}
\label{thm:main}
Let $\delta, \eps>0$ and let $q$ be a prime power which is sufficiently large in terms of $\delta, \eps$. If $I$ is a subset of $[q^{\eps}, 2q^{\eps}]$ consisting of elements coprime to $q$ with $|I| \ge \delta q^{\eps}$, then $\Sigma^{[s]}(I^{-1}) \pmod q = \bZ_q$ for $s = q^{\eps/2}$.  
\end{theorem}

In the proof of Theorem \ref{thm:main}, we will make use of the following key result from \cite{CFP}. Recall that a \textit{generalized arithmetic progression (henceforth GAP) $P$ of dimension $k$} is a set of integers $\{x_0 + \ell_1 x_1 + \ell_2 x_2 + \cdots + \ell_k x_k | 0 \le \ell_1 < L_1, \ldots, 0 \le \ell_k < L_k\}$. A GAP is called \textit{proper} if it has size exactly $L_1 L_2\cdots L_k$. We say that $P$ is \textit{homogeneous} if $\gcd(x_1,\dots,x_k)$ divides $x_0$. For a natural number $t$, we define $tP$ to be the $t$-fold sumset of $P$, while if $t$ is a positive real number which is not an integer and $P = \{\sum_{i=1}^k n_i x_i : a_i \le n_i \leq b_i\}$ is a homogeneous GAP, we can generalize the definition by setting $tP = \{\sum_{i=1}^k n_i x_i : ta_i \le n_i \leq tb_i\}$. 

\begin{theorem} \label{thm:hom-AP-build} 
For any $\beta>1$ and $0<\eta<1$, there are positive constants $c$ and $k$ such that the following holds. Let $A$ be a subset of $[n]$ of size $m$ with $n\le m^{\beta}$ and let $s\in[m^{\eta},cm/\log m]$. Then there exists a subset $\hat{A}$ of $A$ of size at least $m-c^{-1}s\log m$ and a proper GAP $P$ of dimension at most $k$ such
that $\hat{A} \cup \{0\}$ is a subset of $P$. Furthermore, there exists
$A'\subseteq \hat{A}$ of size at most $s$ such that $\Sigma(A')$
contains a homogeneous translate of $csP$, where $csP$ is proper.
\end{theorem}

We will also need the following simple variant of Dirichlet's simultaneous approximation theorem. For a residue class $i\pmod q$, we use the notation $\ol{i}$ for the unique integer in $(-q/2,q/2]$ congruent to $i$ modulo $q$. 

\begin{lemma}\label{lem:nbh} 
    Given a prime power $q$, integers $d_1,\dots,d_k$ coprime to $q$ and positive integers $a_1,\dots,a_k$ such that $\prod_{i=1}^{k} a_i = A$, there exists a positive integer $T < q$ and integers $d_1',\ldots, d_k'$ such that $Td_i = d_i' \pmod q$ and $|d_i'| \le 2 (q/a_i) \cdot (A/q)^{1/k}$ for all $i\in [k]$.
\end{lemma}
\begin{proof}
    Let $b_i = 2 (q/a_i) \cdot (A/q)^{1/k}$. Note that $\lfloor \ol {sd_i}/b_i\rfloor$ takes at most $q/b_i$ values as $s$ ranges over $\bZ_q$. By the pigeonhole principle, there exist distinct $s\ne s'$ in $\bZ_q$ such that $\lfloor \ol{sd_i} / b_i\rfloor = \lfloor \ol{s'd_i} / b_i\rfloor$ for all $i\in [k]$, since $\prod_{i=1}^{k} \frac{q}{b_i} = (q/A) 2^{-k}\prod_{i=1}^{k} a_i < q$. Letting $T=s'-s$, we then have that $|\ol{Td_i}| \le b_i = 2 (q/a_i) \cdot (A/q)^{1/k}$.  
\end{proof}

We now proceed to the proof of Theorem \ref{thm:main}. The basic idea is to use Theorem~\ref{thm:hom-AP-build} to argue that there is a large subset $J$ of the set of inverses $I^{-1}$ which is contained in a proper GAP $P$ of bounded dimension $k$ such that $\Sigma^{[s]}(J)$ contains a proper translate of $csP$. 
We then exploit the nature of the set of inverses to argue that $k$ must in fact be $1$, that is, $P$ is simply a progression, from which the required result quickly follows.

\begin{proof}[Proof of Theorem \ref{thm:main}]
    Let $s=q^{\eps/2}$. Let $\ol{I}$ and $\ol{I^{-1}}$ denote the set of integer representations (in $(-q/2,q/2]$) of $I$ and $I^{-1}$. By Theorem \ref{thm:hom-AP-build}, there is $c$ depending only on $\eps$ such that we can find $J \subseteq \ol{I^{-1}}$ of size at least $|I| - c^{-1} s \log |I| = (1-o(1))|I|$ and a proper GAP $P$ of dimension $k=O_{\eps}(1)$ such that $J \cup \{0\}\subseteq P$ and $\Sigma^{[s]}(J)$ contains a translate of $csP$ which is proper.

    By expanding $P$ by a factor of up to $2^k$ if necessary, we can write $P = \sum_{u=1}^{k} [-a_u,a_u]d_u$. Let $A=\prod_{u=1}^{k} a_u$. With these $a_u$ and $d_u$, we apply Lemma \ref{lem:nbh} to find a value of $T$ satisfying the conclusions of that lemma and let $T\cdot P = \{tx: x\in P\}$. Note that, for any $j\in T\cdot P$, $|\ol{j}| \le \sum_{u=1}^{k} a_u |d_u'| \le 2kq(A/q)^{1/k}$. 

    \begin{claim}
        Let $N$ denote the number of solutions to the equation $\ol{i}\cdot \ol{j} = T \pmod q$ with $i\in \ol{I}$ and $j\in T\cdot J$. Then 
        \begin{equation}\label{eq:sol-count}
            |I|/2\le N < q^{C/\log \log q} \cdot 8kq^{\eps}(A/q)^{1/k}. 
        \end{equation}
    \end{claim}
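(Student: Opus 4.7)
For the lower bound $N\ge |I|/2$, the plan is to exhibit one explicit solution for every $i\in \ol I$ whose modular inverse has its lift in $J$. Given such an $i$, let $x_i$ denote the representative in $(-q/2,q/2]$ of $i^{-1}\pmod q$, so $x_i\in J\subseteq \ol{I^{-1}}$, and set $j:=T\cdot x_i\in T\cdot J$. Then
\[
    i\cdot \ol j \equiv i\cdot Tx_i \equiv T\pmod q,
\]
and distinct $i$ produce distinct pairs $(i,j)$. Since $|J|\ge (1-o(1))|I|$ by the construction of $J$ from Theorem~\ref{thm:hom-AP-build}, the count is at least $|J|\ge |I|/2$ for $q$ sufficiently large.

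For the upper bound, the plan is to combine the magnitude controls on $\ol I$ and $T\cdot J$ with the standard divisor function estimate $d(n)=n^{O(1/\log\log n)}$. Any solution $(i,j)$ has $i\le 2q^{\eps}$, and the calculation immediately preceding the claim (obtained by applying Lemma~\ref{lem:nbh} to the GAP $P$) shows $|\ol j|\le 2kq(A/q)^{1/k}$. Therefore the integer product satisfies $|i\ol j|\le 4kq^{1+\eps}(A/q)^{1/k}$. Writing $i\ol j=T+mq$ confines the integer $m$ to a range of size at most $8kq^{\eps}(A/q)^{1/k}$, which is the leading factor in the target bound. For each such $m$, the number of ordered integer factorizations $i\cdot \ol j=T+mq$ is at most $2\,d(|T+mq|)$, and since $|T+mq|$ is polynomial in $q$, this is bounded by $q^{C/\log\log q}$. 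Multiplying the two factors produces the stated bound.

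The main subtlety in executing this plan is keeping the magnitude bound on $\ol j$ under control. The elements of $T\cdot J$ are genuine integers obtained by scaling the lifts of $J$ by the potentially large scalar $T$, so one cannot expect a priori that their residues $\ol j$ modulo $q$ land in a narrow window. It is precisely the simultaneous approximation produced by Lemma~\ref{lem:nbh} (which is why $T$ was chosen in the first place) that confines $\ol j$ to the interval $[-2kq(A/q)^{1/k},\,2kq(A/q)^{1/k}]$; without this, the range of $m$ would blow up and the divisor-function argument could no longer close the loop. A harmless degenerate case is $T+mq=0$: since $i\ge q^{\eps}>0$, this would force $\ol j=0$ and hence $T\equiv 0\pmod q$, contradicting $0<T<q$, so it contributes nothing.
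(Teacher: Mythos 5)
Your proof is correct and follows essentially the same route as the paper: the lower bound by exhibiting, for each element of $J$, the solution pair coming from its modular inverse in $I$, and the upper bound by bounding $|\ol{i}\cdot\ol{j}|$ via Lemma~\ref{lem:nbh}, restricting the integer $m$ in $\ol{i}\cdot\ol{j}=T+mq$ to a window of size $O(kq^{\eps}(A/q)^{1/k})$, and invoking the divisor bound $\tau(n)\le n^{O(1/\log\log n)}$. Your explicit handling of the degenerate case $T+mq=0$ is a small extra care the paper leaves implicit.
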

    We first complete the proof of Theorem \ref{thm:main} assuming the claim. 
    From (\ref{eq:sol-count}) and the assumption that $|I| \geq \delta q^{\eps}$, we deduce that 
    \begin{equation*} 
        A \ge \delta^k (16k)^{-k}q^{1-Ck/\log \log q}.  
    \end{equation*}
    On the other hand, since $\Sigma^{[s]}(J)$ contains a translate of $csP$ with $csP$ proper and $\Sigma^{[s]}(J) \subseteq (-sq/2, sq/2]$, we have that
    \begin{equation*} 
        c^ks^k A \le c^ks^k |P| = |csP| \le |\Sigma^{[s]}(J)| \le sq.
    \end{equation*}
    Hence, $A \le c^{-k}qs^{1-k} = c^{-k}q^{1-(k-1)\eps/2}$. Provided $q$ is sufficiently large in terms of $\delta, \eps$, these two estimates on $A$ together imply that $k=1$. Therefore, $csP$ is an arithmetic progression of length $\Omega(As) > q$. Furthermore, $P$ must have common difference coprime with $q$ as $J \subseteq \ol{I^{-1}}$ is contained in $P$. Hence, any translate of $csP$ covers all residue classes in $\bZ_q$. This finishes the proof of the theorem assuming the claim. 

    It remains to verify the claim. Since each $j_0\in J$ has $j_0^{-1}\in I \pmod q$, the number of solutions to $\ol{i} \cdot \ol{j} = T \pmod q$ with $i\in \ol{I}$, $j = T \cdot j_0 \in T \cdot J \subseteq T\cdot P$ is at least $|J| = (1-o(1))|I| \ge |I|/2$. 
Since $I \subseteq [q^{\eps}, 2q^{\eps}]$, we have that 
\[|\ol{i} \cdot \ol{j}| \le 2q^{\eps} \cdot 2kq(A/q)^{1/k}.\]
    As such, if $\ol{i} \cdot \ol{j} = T\pmod q$, then $\ol{i} \cdot \ol{j} = qx+T$, where $0\le |x| \le 4kq^{\eps} (A/q)^{1/k}$. But the number of solutions to the equation $\ol{i} \cdot \ol{j} = qx+T$ with $0\le |x| \le 4kq^{\eps} (A/q)^{1/k}$ is bounded above by
    \begin{equation*}
        \sum_{|x|\le 4kq^{\eps}(A/q)^{1/k}} \tau(qx+T) < q^{C/\log \log q} \cdot 8kq^{\eps}(A/q)^{1/k}, 
    \end{equation*} 
    where $\tau(n)$ denotes the number of divisors of $n$ and we have used the standard bound $\tau(n) \le q^{C/\log \log q}$ for an absolute constant $C$ and all $n\le q^2$. This completes the proof of the claim. 
\end{proof}

\section{Absorption}

We are now ready to prove Theorem \ref{thm:main-count} in the following explicit form. We recall that a positive integer $n$ is {\it $t$-smooth} if all of its prime factors are at most $t$ and {\it $t$-powersmooth} if all of its prime power factors are at most $t$. 

\begin{theorem}\label{thm:main-count-explicit}
    Let $\eps > 0$ be sufficiently small. Then there exists $\xi > 0$ such that if $\eps \le x \le \xi \ln n$ is a rational whose denominator is $(n^{1-\eps}/2)$-powersmooth, then the number of subsets $A\subseteq [n]$ with $x = \sum_{a\in A}1/a$ is at least $2^{c_x n - c_\eps n}$, where $c_\eps \to 0$ as $\eps \to 0$ and
    \[
        c_x := \int_{0}^{1}h\left(\frac{1}{1+e^{\lambda/y}}\right)dy
    \] 
    with $\lambda$ the unique real number such that 
    \[
        \int_{0}^{1}\frac{1}{y(1+e^{\lambda/y})}dy = x.
    \]
\end{theorem}

We first record a simple lemma guaranteeing that most integers at most $n$ are $(n^{1-\eps}/2)$-powersmooth. 

\begin{lemma}\label{lem:powersmooth}
    For $\delta$ sufficiently small, $n$ sufficiently large in terms of $\delta$ and $t = n^{1-\delta}$, at least $(1-2\delta)n$ positive integers at most $n$ are $t$-powersmooth. 
\end{lemma}
\begin{proof}
    It is well known that if $t = n^{u}$, the number of $t$-smooth numbers up to $n$ is asymptotic to $\psi(u)n$, where $\psi$ is the Dickman function taking values in $(0,1)$ for $u\in(0,1)$. Moreover, for $u>1/2$, $\psi(u) = 1+\ln u$. Thus, for $u=1-\delta$ and $t=n^{1-\delta}$, at least $(1+\ln(1-\delta)-o(1))n \ge (1-\frac 32 \delta)n$ positive integers at most $n$ are $n^{1-\delta}$-smooth.

    Among the $t$-smooth numbers, the only ones that are not $t$-powersmooth are those divisible by a prime power $p^{\alpha}$ where $p\le t$ but $p^\alpha > t$. Using the prime number theorem, we may upper bound the total count of such exceptional smooth numbers by
    \[
    \sum_{p \le t} \left\lfloor \frac{n}{t} \right\rfloor = \pi(t) \left\lfloor \frac{n}{t} \right\rfloor = o(n).
    \]
    Hence, at least $(1-\frac 32 \delta)n - o(n) \geq (1-2\delta)n$ positive integers at most $n$ are $t$-powersmooth.
\end{proof}

Finally, we prove Theorem \ref{thm:main-count-explicit}. Recall the notation that, for $A\subseteq [n]$, $s(A) = \sum_{a\in A}1/a$.

\begin{proof}[Proof of Theorem \ref{thm:main-count-explicit}.]
By choosing $\xi$ suitably, we can assume that $n$ is sufficiently large in terms of $\eps$. Let $L$ be sufficiently large, assuming in particular that Theorem~\ref{thm:main} applies for $\eps$ as in the statement of the theorem, $\delta = \frac 12$ and all $q > L$. 
Let $K$ denote the least common multiple of all prime powers at most $L$. 
We first reserve the set $\cR$ of multiples of $K$ in $[n]$. 
Let $\cP(q) = q\cdot [q^{\eps}, 2q^{\eps}] \setminus \cR$ and $\cP = \bigcup_{q} \cP(q)$, where $q$ ranges over all prime powers at most $n^{1-\eps}/2$. Here $q\cdot S = \{qs:s\in S\}$ and the notation $[q^{\eps},2q^{\eps}]$ refers to the set of integers in this interval. 
Let $\cS$ denote the set of $(n^{1-\eps}/2)$-powersmooth numbers at most $n$ and $\cU = \cS \setminus (\cR \cup \cP)$. Lemma \ref{lem:powersmooth} implies that $|\cS| \ge (1-O(\eps))n$ and we also have that $|\cP \cap [n]| \le \sum_{q\le n^{1-\eps}}2q^{\eps} \le 4 \frac{n}{\ln n}$. Thus, 
\begin{equation} \label{eq:cU-est}
    n-|\cU| \le n/K + O(\eps n).
\end{equation}

Let $\eta>0$ be a constant to be chosen later. By Lemma \ref{lem:main-ent}, applied with $x$ replaced by $(1-\eta)x$, we can find many subsets of $\cU$ whose sums of reciprocals are at most $(1-\eta)x$. Indeed, the number of such subsets is at least 
\[
    2^{H(P((1-\eta)x)) - (n - |\cU|) - O(\sqrt{n/c_{(1-\eta)x,n}})}. 
\]
Fix one such sum corresponding to a set $A_0 \subseteq [n]$ and let $x_0 = x - s(A_0) \ge \eta x$. Consider the following procedure, where at each step $i$ we have a real number $x_i$ and a set $A_i$ for which $x_i = x - s(A_i)$: 
\begin{enumerate}
    \item In decreasing order over the prime powers larger than $L$, consider the largest prime power $q = q_i \le n^{1-\eps}/2$ of a prime $p = p_i$  which appears as a factor of the denominator of $x_{i}$. 
    We then find $B_i \subseteq (1/q)\cdot \cP(q)$ of size at most $q^{\eps/2}$ such that, for $x_{i} = \frac{u_{i}}{v_{i}}$ with $u_{i}, v_{i}$ coprime, $s(B_i) = -\frac{u_{i}}{v_i/q} \pmod q$. 
    We say that step $i$ succeeds if we can find such a $B_i$. If it does succeed, we update $x_{i+1} = x_i - s(q \cdot B_i)$ and $A_{i+1} = A_i \cup q \cdot B_i$, noting by our choice that no nonzero power of $p$ divides the denominator of $x_{i+1}$. Furthermore, any new prime power divisor of the denominator of $x_{i+1}$ is at most $2q^{\eps}$. 
    
    \item We iterate until all the prime powers $q_i>L$ have been processed. At this point, the final output $x_{f}$ is a rational number whose denominator is $L$-powersmooth. We then find a subset of the reservoir $\cR$ whose sum of inverses is equal to $x_{f}$.
\end{enumerate}

The following claim guarantees that the procedure above succeeds. 
\begin{claim}
    For each $i$, step $i$ succeeds. Furthermore, $s(q_i\cdot B_i) \le q_i^{-1-\eps/2}$ and,  
    for some absolute constant $C>0$,  
    \begin{equation}\label{eq:xfx0}
        |x_{f}-x_0| \le C\eps^{-1}L^{-\eps/2}. 
    \end{equation}
\end{claim}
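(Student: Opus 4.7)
The plan is to iterate over the relevant prime powers in decreasing order and, at each step, apply Theorem~\ref{thm:main} to produce the set $B_i$ that cancels the current largest prime power in the denominator. The three parts of the claim are then immediate consequences: the first from the theorem itself, the second from the size bound on $B_i$, and the third from summing a convergent tail.

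At step $i$, write $q = q_i = p^a$ and $x_i = u_i/(qw_i)$ with $\gcd(u_iw_i,q)=1$. I would take $I$ to be the subset of $(1/q)\cdot \cP(q)\subseteq [q^\eps, 2q^\eps]$ consisting of elements coprime to $q$. The only $m\in[q^\eps, 2q^\eps]$ excluded from $(1/q)\cdot \cP(q)$ are the multiples of $K' := K/K_p$, where $K_p$ is the $p$-part of $K$, so the density of $I$ in $[q^\eps, 2q^\eps]$ equals $(1-1/p)(1-1/K')$ up to lower-order error. Taking $L$ large forces $K'$ large, so this density is at least $1/2$ and $|I|\ge \tfrac{1}{2}q^\eps$ for $q$ sufficiently large; Theorem~\ref{thm:main} (with $\delta=1/2$) then produces $B_i\subseteq I$ of size at most $q^{\eps/2}$ with $s(B_i)\equiv u_i w_i^{-1}\pmod q$. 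By this choice, $x_{i+1} = x_i - s(q\cdot B_i) = (u_i - w_i s(B_i))/(qw_i)$ has nonnegative $p$-adic valuation, so the prime $p$ is eliminated from the denominator; any new prime power factor of the denominator of $x_{i+1}$ comes from some $b\in B_i\subseteq[q^\eps, 2q^\eps]$ and is hence at most $2q^\eps$, as required.

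The size bound $s(q_i\cdot B_i)\le q_i^{-1-\eps/2}$ follows from $s(B_i)\le |B_i|/q^\eps\le q^{-\eps/2}$ and then dividing by $q$. For the bound~(\ref{eq:xfx0}), the key observation is that the processed prime powers $q_1 > q_2 > \cdots$ are strictly decreasing: all non-$p_i$ prime power divisors of $v_i$ are $<q_i$, and the new divisors contributed by $B_i$ are at most $2q_i^\eps<q_i$ for $q_i$ large. Hence the $q_i$ are distinct prime powers in $(L, n^{1-\eps}/2]$, and
\[
    |x_f - x_0| \le \sum_i s(q_i\cdot B_i) \le \sum_{\substack{q > L \\ q \text{ prime power}}} q^{-1-\eps/2}.
\]
The contribution from proper prime powers $p^k$ ($k\ge 2$) with $p^k>L$ is $O(L^{-\eps/2})$, while the prime contribution is $\sum_{p > L}p^{-1-\eps/2}\le \int_L^\infty t^{-1-\eps/2}\,dt = (2/\eps)L^{-\eps/2}$. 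Together these give $|x_f - x_0|\le C\eps^{-1}L^{-\eps/2}$.

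The main obstacle is verifying the density hypothesis $|I|\ge\tfrac{1}{2}q^\eps$ at every step; this is tight for $p=2$ where the coprimality constraint alone already removes half the interval, and it is resolved by taking $L$ large so that the $\cR$-avoidance constraint removes only a negligible further fraction. Apart from that, the proof is a direct application of Theorem~\ref{thm:main}, a telescoping sum, and a standard estimate on the prime power tail.
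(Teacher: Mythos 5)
Your argument is correct and is essentially the paper's own proof, which simply cites Theorem~\ref{thm:main} for the success of each step, notes $s(q_i\cdot B_i)\le q_i^{\eps/2}/q_i^{1+\eps}$, and bounds the tail $\sum_i q_i^{-1-\eps/2}$ by the sum over all integers $m>L$ (your split into primes and proper prime powers is an equally valid route to the same bound); you additionally spell out the density verification for Theorem~\ref{thm:main} and the distinctness of the $q_i$, which the paper leaves implicit. One pedantic note: for $p_i=2$ the density $(1-\frac{1}{p})(1-\frac{1}{K'})$ is strictly below $\frac12$, so one should really invoke Theorem~\ref{thm:main} with $\delta$ slightly smaller than $\frac12$ --- a blemish shared by the paper's own choice of $\delta=\frac12$, and fixed by adjusting that constant.
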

\begin{proof}
    Theorem \ref{thm:main} implies immediately that step $i$ always succeeds. Furthermore, by our choice of $B_i$,
    \[
        s(q_i \cdot B_i) \le \frac{q_i^{\eps/2}}{q_i^{1+\eps}} = q_i^{-1-\eps/2}.   
    \]
    The estimate (\ref{eq:xfx0}) follows since the $q_i$ are distinct integers between $L$ and $n$, so 
    \[
        |x_f - x_0| = \sum_i s(q_i \cdot B_i) \le \sum_{L<q_i<n} q_i^{-1-\eps/2} \le \sum_{L<m<n} m^{-1-\eps/2} < O(\eps^{-1}L^{-\eps/2}), 
    \]
    as required.
    \end{proof}

We ensure that $L,\eta>0$ are chosen (depending on $\eps$) so that $C\eps^{-1}L^{-\eps/2} < \eta x/2$. From (\ref{eq:xfx0}), we have that $x_f$ is a positive rational number at most $x$ whose denominator is $L$-powersmooth. As such, we have that $Kx_f$ is a positive integer with $Kx_f \le Kx$. We now note that there exists a subset $D \subseteq [n/K]$ such that $s(D)= K x_f$, where we use the assumption that $x\le \xi \ln n$ for $\xi$ chosen sufficiently small in $\eps$, so that $Kx < \eps \ln(n/K)$. 
To see that this is the case, one may, for example, make use of Croot's result~\cite{C01} that 1 can always be written as the sum of reciprocals of numbers from any interval of the form $[t, (e+o(1))t]$.
This allows us to iteratively remove $K x_f$ disjoint subsets from $[n/K]$, the sum of the reciprocals of each of which is 1.
We then set their union to be $D$, noting that $K\cdot D \subseteq \cR$ is disjoint from $\cU$ and $\cP$. 

We then have $x = \sum_{d\in D}\frac{1}{Kd} + \sum_i s(q_i \cdot B_i) + s(A_0)$ and the number of such distinct representations is at least the number of choices for $A_0$, which is bounded below by 
\[
    2^{H(P((1-\eta)x)) - (n - |\cU|) - O(\sqrt{n/c_{(1-\eta)x,n}})} \ge 2^{H(P(x)) - c_\eps n},
\]
for an appropriate constant $c_\eps$ with $c_\eps \to 0$ as $\eps \to 0$, where we have used Lemma \ref{lem:main-ent} and (\ref{eq:cU-est}). This completes the proof of Theorem \ref{thm:main-count-explicit}. 
\end{proof}

\section*{Note added}
As we completed this paper, we learned that a result similar to our Theorem~\ref{thm:main-count} 
was obtained simultaneously and independently, though using rather different methods, by Yang P. Liu and Mehtaab Sawhney~\cite{LM24}.

\section*{Acknowledgments} 
We are grateful to the American Institute of Mathematics for hosting the SQuaREs project at which this work was initiated. We are also indebted to Zachary Chase and the user Lucia for bringing the MathOverflow post~\cite{T17} to our attention.

\bibliographystyle{amsplain}


\begin{dajauthors}
\begin{authorinfo}[dc]
  David Conlon\\
  Department of Mathematics\\
  California Institute of Technology\\
  Pasadena, USA\\
  dconlon@caltech.edu \\
  \url{http://www.its.caltech.edu/~dconlon/}
\end{authorinfo}
\begin{authorinfo}[jf]
  Jacob Fox\\
  Department of Mathematics\\ 
  Stanford University\\
  Stanford, USA\\
  jacobfox@stanford.edu \\
  \url{https://stanford.edu/~jacobfox/}
\end{authorinfo}
\begin{authorinfo}[xh]
  Xiaoyu He\\
  School of Mathematics\\ 
  Georgia Institute of Technology\\ 
  Atlanta, USA\\
  xhe399@gatech.edu\\
  \url{https://alkjash.github.io}
\end{authorinfo}
\begin{authorinfo}[dm]
  Dhruv Mubayi\\
  Department of Mathematics, Statistics and Computer Science\\ 
  University of Illinois Chicago\\ 
  Chicago, USA\\
  mubayi@uic.edu \\
  \url{https://homepages.math.uic.edu/~mubayi/}
\end{authorinfo}
\begin{authorinfo}[htp]
  Huy Tuan Pham\\
  Department of Mathematics\\
  California Institute of Technology\\
  Pasadena, USA\\
  htpham@caltech.edu \\
  \url{https://huytuanpham.github.io}
\end{authorinfo}
\begin{authorinfo}[as]
  Andrew Suk\\
  Department of Mathematics\\ 
  University of California at San Diego\\ 
  La Jolla, USA\\
  asuk@ucsd.edu \\
  \url{https://mathweb.ucsd.edu/~asuk/}
\end{authorinfo}
\begin{authorinfo}[jv]
  Jacques Verstra\"ete\\
  Department of Mathematics\\ 
  University of California at San Diego\\ 
  La Jolla, USA\\
  jacques@ucsd.edu \\
  \url{https://annatar0.wixsite.com/website-4}
\end{authorinfo}
\end{dajauthors}

\end{document}